\documentclass[journal,twoside,web]{ieeecolor}
\usepackage{generic}
\usepackage{cite}
\usepackage{amsfonts}
\usepackage{algorithmic}
\usepackage{textcomp}
\usepackage{graphics}   
\usepackage{graphicx,import}   
\usepackage{epsfig}		
\usepackage{mathptmx}   
\usepackage{times}      
\usepackage{amsmath}    
\usepackage{amssymb}    
\usepackage{epstopdf}
\usepackage{color}
\usepackage[dvipsnames]{xcolor}
\usepackage{mathtools}
\usepackage{relsize}
\usepackage{enumerate}

\def\BibTeX{{\rm B\kern-.05em{\sc i\kern-.025em b}\kern-.08em
    T\kern-.1667em\lower.7ex\hbox{E}\kern-.125emX}}


\DeclareMathOperator*{\esssup}{ess\,sup}
\DeclareMathOperator*{\essinf}{ess\,inf}

\newtheorem{theorem}{Theorem}[section]
\newtheorem{lemma}[theorem]{Lemma}
\newtheorem{proposition}[theorem]{Proposition}

\newtheorem{definition}[theorem]{Definition}

\newtheorem{remark}{Remark}
\newtheorem{example}{Example}

\begin{document}
\title{A Lyapunov approach for the exponential stability of a damped Timoshenko beam}
\author{Andrea Mattioni, Yongxin Wu, Yann Le Gorrec
\thanks{Submitted for review on February 24, 2022. This work has received funding from the European Union’s Horizon 2020 research and innovation programme under the Marie Sklodowska- Curie grant agreement No 765579. This work has been supported by the EIPHI Graduate School (contract ``ANR-17-EURE-0002"), by the ANR IMPACTS project (contract ``ANR-21-CE48-0018") and the MIAI@Grenoble Alpes (contract ``ANR-19-P3IA-0003").}
\thanks{Andrea Mattioni is with the Universit\'e Grenoble Alpes, CNRS, Grenoble-INP, GIPSA-lab, F-38000, Grenoble, France.
{\tt\small andrea.mattioni@grenoble-inp.gipsa-lab.fr}}
\thanks{Yongxin Wu and Yann Le Gorrec are with the AS2M department of the FEMTO-ST research lab University of Bourgogne Franche-Compt\'e, 24 rue Savary, 25000 Besan\c{c}on, France. 
{\tt\small yongxin.wu@femto-st.fr}
{\tt\small yann.le.gorrec@ens2m.fr}}
}

\maketitle

\begin{abstract}
In this technical note, we consider the stability properties of a viscously damped Timoshenko beam equation with spatially varying parameters. With the help of the port-Hamiltonian framework, we first prove the existence of solutions and show, by an appropriate Lyapunov function, that the system is exponentially stable and has an explicit decay rate. The explicit exponential bound is computed for an illustrative example of which we provide some numerical simulations.\end{abstract}

\begin{IEEEkeywords}
Distributed parameter systems, port-Hamiltonian systems, Viscous damping, Exponential stability.
\end{IEEEkeywords}

\section{Introduction}\label{sec:introduction}
\graphicspath{{./Figures/}}

The Timoshenko beam theory is often used in engineering applications to represent the propagation of vibrations in mechanical systems such as buildings, aircraft structures, flexible robots and micro grippers \cite{Mattioni2020, Ramirez2014}. In this technical note, we consider the Timoshenko beam Partial Differential Equations (PDEs) with space-varying parameters and viscous damping. In the case of constant parameters, the system has already been proven to be exponentially stable in \cite{Raposo2004}, using the Gearhart-Herbst-Prüss-Huang spectral method \cite{Huang1985}. In \cite{Raposo2004} the authors prove that there exists $M>0$ and $w>0$ such that $||T(t)z_0||\leq Me^{-wt}$ for all $z_0\in Z$, but do not suggest any estimation of these two quantities. The same result with space varying parameters has been proved in \cite{Shi2001} using the same techniques. Then, in \cite{Kim1987} the authors constructed a Lyapunov function to prove the exponential stability in case of constant parameters, but without making explicit the state's norm decay rate. Moreover, different studies focused on the stabilisation problem in the case of the presence of damping in only one beam dynamics, \textit{e.g.} vertical or rotational dynamics. In particular, in \cite{Almeida2013} the authors used a Lyapunov function to show that the system is exponentially stable if and only if the wave propagation speeds of the two dynamics are identical. A technical extension to linear and nonlinear operator equations using Lyapunov techniques can be found in \cite{Haraux1988,Zuazua1990}. Over the last twenty years, the port-Hamiltonian (PH) framework has proved to be a useful tool for stability analysis and control design for PDEs. It has been used to design static \cite{Villegas2005}, linear dynamic \cite{Ramirez2014} and nonlinear dynamic \cite{Augner2019} PDEs boundary controllers able to exponentially stabilize the origin of the closed-loop system. Existing results using the PH framework have been obtained without considering internal dissipation (\textit{e.g.} viscous damping for flexible beams). The absence of internal dissipation renders it difficult to explicitly find the exponential bound parameters, and only ``existence" results have been assessed \cite{AugnerThesis}.

Inspired by the work in \cite{Zuazua1990} and \cite{Almeida2013}, in this technical note we construct a Lyapunov function with crossing terms in order to prove the exponential stability in the case of spatially varying parameters with viscous damping in both the vertical and rotational dynamics. Moreover, the proposed Lyapunov function allows to compute the parameters $M,\; w$ of the exponential bound $||T(t)z_0||\leq Me^{-wt}$. This work relies on the PH framework \cite{Villegas2007,Jacob2012} for the result on the existence and uniqueness of solutions, and on \cite{Macchelli2004} for the state variable selection. 

The paper is organized as follows. In Section \ref{preliminaries}, we recall some technical preliminaries that will be useful for the stability proof. In Section \ref{mainresult}, is stated the main result of the paper {\it i.e.} exponential stability with an explicit formulation of the decay rate of the solution. Then, a numerical example is presented to validate the theoretical results. This technical note ends with some conclusions in Section \ref{conclusions}.

\section{Preliminaries}\label{preliminaries}
\subsection{Usefull inequalities}

Throughout the paper, we make use of some standard inequalities that are often used in the literature on the control of PDEs. We recall three classical inequalities, that hold for all functions $f,g:\Omega\rightarrow \mathbb{R}$ with $\Omega\in \mathbb{R}^N$, $N\in\mathbb{N}_{\geq 1}$:\\
\textit{Young's inequality}
\begin{equation}
fg\leq \frac{1}{2\alpha}|f|^2 + \frac{\alpha}{2}|g|^2,
\end{equation}
for all $\alpha>0$.\\
\textit{Cauchy-Schwarz inequality}
\begin{equation}
\int_0^Lf(\xi)g(\xi)d\xi \leq \left( \int_0^Lf(\xi)^2d\xi \right)^{\frac{1}{2}} \left( \int_0^Lg(\xi)^2d\xi \right)^{\frac{1}{2}}.
\end{equation}
\textit{Triangle-type inequality}
\begin{equation}
(f\pm g)^2\leq 2(|f|^2 + |g|^2).
\end{equation}
In the next lemma we introduce a Poincar\'e-type inequality that can be derived from \cite[Theorem 256]{Hardy1959}, changing the integration interval from $[0,1]$ to $[0,L]$.
\begin{lemma}[Variation of the Wirtinger's inequality]\label{lem:Wirtinger}
For any absolutely continuous function $f$ such that $f(0)=0$,
\begin{equation}
\int_0^L f(x)^2dx \leq \left(\frac{2L}{\pi}\right)^2\int_0^L\left(\frac{d}{dx}f(x)\right)^2dx.
\end{equation}
\end{lemma}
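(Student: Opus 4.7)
The plan is to deduce this lemma from the cited inequality on $[0,1]$ by a simple change of variables, since Theorem 256 of Hardy--Littlewood--P\'olya gives, for every absolutely continuous $g:[0,1]\to\mathbb{R}$ with $g(0)=0$, the sharp estimate
\begin{equation*}
\int_0^1 g(\xi)^2\, d\xi \;\leq\; \frac{4}{\pi^2}\int_0^1 \bigl(g'(\xi)\bigr)^2\, d\xi.
\end{equation*}
The extremal function is $g(\xi)=\sin(\pi\xi/2)$, which explains the constant $4/\pi^2=(2/\pi)^2$.

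First I would reduce the problem to the unit interval. Given $f:[0,L]\to\mathbb{R}$ absolutely continuous with $f(0)=0$, I would set $g(\xi):=f(L\xi)$ for $\xi\in[0,1]$. Then $g$ is absolutely continuous on $[0,1]$, satisfies $g(0)=f(0)=0$, and by the chain rule $g'(\xi)=L\, f'(L\xi)$ almost everywhere. Thus $g$ is admissible in the $[0,1]$-version of the inequality.

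Next I would apply the normalized inequality to $g$ and unwind the substitution. The change of variables $x=L\xi$, $d\xi=dx/L$ gives
\begin{equation*}
\int_0^1 g(\xi)^2\, d\xi \;=\; \frac{1}{L}\int_0^L f(x)^2\, dx,\qquad
\int_0^1 \bigl(g'(\xi)\bigr)^2\, d\xi \;=\; L\int_0^L \bigl(f'(x)\bigr)^2\, dx.
\end{equation*}
Substituting these two identities into the inequality for $g$ and multiplying through by $L$ yields precisely $\int_0^L f(x)^2\, dx \leq (2L/\pi)^2 \int_0^L (f'(x))^2\, dx$, which is the claim.

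Since the heavy lifting (establishing the sharp constant $4/\pi^2$) is performed by the cited reference, there is no real obstacle; the only point requiring care is tracking the powers of $L$ through the rescaling so as not to lose the factor $L^2$ in the final constant. Should one wish to self-contain the proof, the main difficulty would instead be proving the sharp Wirtinger inequality itself, which is typically done either via a Fourier sine-series expansion of $g$ (whose coefficients satisfy $\sum a_n^2 \leq (4/\pi^2)\sum (n-\tfrac12)^2 \pi^2 a_n^2 \cdot (2/\pi)^2$-type estimates) or via the calculus of variations applied to the Rayleigh quotient $\int (f')^2 / \int f^2$ with the boundary condition $f(0)=0$.
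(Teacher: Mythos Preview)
Your argument is correct and follows exactly the route indicated in the paper, which does not write out a proof but merely cites Theorem~256 of Hardy--Littlewood--P\'olya on $[0,1]$ and notes that one passes to $[0,L]$ by rescaling. Your tracking of the factors of $L$ is accurate and yields the stated constant $(2L/\pi)^2$.
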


\subsection{Lyapunov stability theory}

Let $z$ belong to a Hilbert space $Z$ and consider the linear differential equation
\begin{equation}\label{eq:LinearDyn}
\dot{z} = Az\quad z(0)=z_0
\end{equation}
where we assume that the operator $A$ with domain $D(A)$ is the infinitesimal generator of a $C_0$-semigroup $T(t)$ on the state space $Z$.  In the following we denote the solution of \eqref{eq:LinearDyn} with initial condition $z_0$ as $z(t,z_0)=T(t)z_0$. Now, we introduce the concept of Lyapunov function for \eqref{eq:LinearDyn}.
\begin{definition}\label{def:LyapunovFunctional}
A continuous functional $V:Z\mapsto [0,\infty)$ is a \textit{Lyapunov functional} for \eqref{eq:LinearDyn} on $Z$ if $V(z(t,z_0))$ is Dini differentiable at $t=0$ for all $z_0\in X$ and there holds 
\begin{equation}\label{eq:DiniLyap}
\dot{V}_+(z_0):= \limsup_{t\rightarrow 0} \frac{V(z(t,z_0))-V(z_0)}{t}\leq 0.
\end{equation}\hfill $\blacksquare$
\end{definition}
Since in most practical cases, the limit \eqref{eq:DiniLyap} it is not easy to compute, we rely on Lemma 11.2.5 of \cite{Curtain2020} to establish the relation between the Dini time derivative (see Definition A.5.43 in \cite{Curtain2020}) and the Fr\'echet derivative (see Definition A.5.31 in \cite{Curtain2020}). In fact, if $V$ is Fr\'echet differentiable, then for $z\in \mathbf{D}(A)$, $V(z(t,z_0))$ is Dini differentiable and 
\begin{equation}\label{eq:WellStab:TimeDer}
\dot{V}_+(z_0):=dV(z_0)Az_0
\end{equation}
where $dV$ is the Fr\'echet derivative of $V$. In the following, we cite a part of Theorem 11.2.7 from \cite{Curtain2020}, that will be instrumental to prove exponential stability. 

\begin{theorem}\label{thm_GenExp}
Suppose that $V$ is a Lyapunov functional for \eqref{eq:LinearDyn} with $V(0)=0$. If there exist two positive constants $\kappa_1, \kappa_2>0$ such that $V(z)\geq \kappa_1 ||z||^2$ and $\dot{V}_+(z)\leq -\kappa_2 V(z)$ for all $z\in Z$, then the origin is globally exponentially stable, \textit{i.e.}
\begin{equation}
||z(t,z_0)||\leq \sqrt{\frac{V(z_0)}{\kappa_1}}e^{-\frac{\kappa_2}{2}t}.
\end{equation}
\end{theorem}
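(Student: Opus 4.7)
The plan is to reduce the theorem to a one-dimensional comparison inequality and then use the coercivity bound $V(z)\geq \kappa_1\|z\|^2$ to transfer exponential decay from the Lyapunov functional to the state norm. Concretely, for fixed $z_0\in Z$ I would introduce the scalar map $v(t):=V(z(t,z_0))$ on $[0,\infty)$ and study its upper-right Dini derivative.

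The first key step is to show $\dot v_+(t)\leq -\kappa_2 v(t)$ for every $t\geq 0$. This follows from the semigroup identity $z(t+h,z_0)=z(h,z(t,z_0))$, since
\begin{equation}
\dot v_+(t)=\limsup_{h\to 0^+}\frac{V(z(h,z(t,z_0)))-V(z(t,z_0))}{h}=\dot V_+(z(t,z_0))\leq -\kappa_2 V(z(t,z_0)),
\end{equation}
where the last inequality is the hypothesis applied at the point $z(t,z_0)\in Z$. The second key step is a Dini-derivative comparison lemma: if $v:[0,\infty)\to[0,\infty)$ is continuous and satisfies $\dot v_+(t)\leq -\kappa_2 v(t)$ pointwise, then $v(t)\leq v(0)e^{-\kappa_2 t}$. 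I would get this by looking at $w(t):=v(t)e^{\kappa_2 t}$, computing $\dot w_+(t)\leq 0$, and invoking the standard result that a continuous real-valued function with nonpositive upper-right Dini derivative is nonincreasing.

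The third step is to combine the two bounds: using $\kappa_1\|z(t,z_0)\|^2\leq V(z(t,z_0))=v(t)\leq v(0)e^{-\kappa_2 t}=V(z_0)e^{-\kappa_2 t}$, dividing by $\kappa_1$ and taking square roots yields exactly
\begin{equation}
\|z(t,z_0)\|\leq \sqrt{\frac{V(z_0)}{\kappa_1}}\,e^{-\frac{\kappa_2}{2}t},
\end{equation}
which is the claimed bound. Continuity of $v$ is inherited from continuity of $V$ and strong continuity of the semigroup $T(t)$, so no additional regularity of $z_0$ is needed.

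The main obstacle is the Dini-derivative comparison step: the hypothesis is stated in terms of $\dot V_+$, which need not be a classical derivative, so one cannot simply integrate. I would handle this by first establishing the estimate for $z_0\in D(A)$, where \eqref{eq:WellStab:TimeDer} gives $\dot v_+(t)=dV(z(t,z_0))Az(t,z_0)$ and the usual ODE comparison applies, and then extending to arbitrary $z_0\in Z$ by density of $D(A)$ together with the continuity of $V$ and the continuous dependence $\|z(t,z_0)-z(t,\tilde z_0)\|\leq \|T(t)\|\,\|z_0-\tilde z_0\|$ on bounded time intervals. The rest of the argument is then a routine chain of inequalities.
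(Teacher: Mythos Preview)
The paper does not actually prove this theorem: it is quoted verbatim as (part of) Theorem~11.2.7 of \cite{Curtain2020} and used as a black box. So there is no ``paper's own proof'' to compare against.

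Your argument is the standard one and is correct. Setting $v(t)=V(z(t,z_0))$, the semigroup identity gives $\dot v_+(t)=\dot V_+(z(t,z_0))\le -\kappa_2 v(t)$, and the product-rule computation for $w(t)=e^{\kappa_2 t}v(t)$ together with the Dini monotonicity lemma (a continuous real function with nonpositive upper-right Dini derivative is nonincreasing) yields $v(t)\le v(0)e^{-\kappa_2 t}$; the coercivity $V(z)\ge \kappa_1\|z\|^2$ then gives the claimed bound. One minor comment: your fallback density argument at the end, which passes through \eqref{eq:WellStab:TimeDer}, tacitly assumes $V$ is Fr\'echet differentiable, which is \emph{not} part of the hypotheses of the theorem as stated (Definition~\ref{def:LyapunovFunctional} only requires Dini differentiability). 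Fortunately you do not need it: the Dini comparison step you outline first already works for every $z_0\in Z$, so the density route is superfluous and best dropped.
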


\section{Main result \label{mainresult}}

\subsection{Port Hamiltonian formulation of the Timoshenko's beam with viscous damping}\label{sec:ViscousDamping}

We consider the dynamics equations of a clamped Timoshenko beam with viscous damping
\begin{equation}\label{eq:OriginalEqs}
\begin{array}{l}
\rho \frac{\partial^2w}{\partial t^2}= \frac{\partial}{\partial \xi}\left(K\left(\frac{\partial w}{\partial \xi}- \phi\right)\right) - \gamma\frac{\partial w}{\partial t} \\
I_\rho  \frac{\partial^2\phi}{\partial t^2} = \frac{\partial}{\partial \xi}\left(EI\frac{\partial \phi}{\partial \xi}\right) + K\left(\frac{\partial w}{\partial \xi}- \phi\right) - \delta\frac{\partial \phi}{\partial t}\\
w(0,t) = \phi(0,t) = 0 \\
K(L)\left( \frac{\partial w}{\partial \xi}(L,t) - \phi(L,t)\right) = \gamma(L)\frac{dw}{dt}(L,t) \\
EI(L,t)\frac{\partial \phi}{\partial \xi} (L,t) = \delta(L)\frac{d\phi}{dt}(L,t).
\end{array}
\end{equation}
The term $\xi\in [0,L]$ identifies the spatial coordinate, while $w(\xi,t)$ and $\phi(\xi,t)$ represent the deflection and the relative rotation of a beam cross-section in the rotating frame at position $\xi$ and time $t$, respectively. $E(\xi), I(\xi)$ are the spatially dependent Young's modulus and moment of inertia of the beam's cross-section, respectively. $\rho(\xi), I_{\rho}(\xi)$ are the spatially dependent density and mass moment of inertia of the beam's cross-section, respectively. The mass moment of inertia of the cross-section is defined as $I_{\rho}(\xi)= I(\xi)\rho(\xi)$. $K(\xi)$ is defined as $K(\xi) = kG(\xi)A(\xi)$, where $k$ is a constant dependent on the shape of the cross-section  $G(\xi)$ is the shear modulus and $A(\xi)$ is the cross-sectional area. $\gamma(\xi)$ and $\delta(\xi)$ represent the space depending translating and the rotating components of the viscous damping, respectively. 
Throughout this paper, all physical parameters and their reciprocals are assumed to be absolutely continuous, positive definite and belonging to the $L_\infty([0,L])$ equivalent class of functions. Following \cite{Macchelli2004} we define the energy variables, 
\begin{equation}\label{eq:Variable definition}
z_1 = \rho \frac{\partial w}{\partial t} \quad z_2 = I_\rho \frac{\partial \phi}{\partial t} \quad z_3 = \frac{\partial w}{\partial \xi}- \phi \quad z_4 = \frac{\partial \phi}{\partial \xi}
\end{equation}
such to write the PH representation of the system with $z=[z_1\; z_2\; z_3\; z_4]^T$
\begin{equation}\label{eq:system}
\dot{z} = P_1\frac{\partial}{\partial \xi}(\mathcal{H}z) + (P_0 - G_0)(\mathcal{H}z)
\end{equation}
where,
\begin{equation}
\begin{array}{c}
P_1 = \begin{bmatrix} 0 & 0 & 1 & 0  \\ 0 & 0 & 0 & 1 \\ 1 & 0 & 0 & 0 \\ 0 & 1 & 0 & 0 \end{bmatrix} \quad P_0 = \begin{bmatrix} 0 & 0 & 0 & 0  \\ 0 & 0 & 1 & 0 \\ 0 & -1 & 0 & 0 \\ 0 & 0 & 0 & 0 \end{bmatrix}\\
\mathcal{H} = \begin{bmatrix}\frac{1}{\rho} & 0 & 0 & 0 \\ 0 & \frac{1}{I_\rho} & 0 & 0 \\ 0 & 0 & K & 0 \\ 0 & 0 & 0 & EI \end{bmatrix}\quad 
G_0 = \begin{bmatrix}\gamma & 0 & 0 & 0 \\ 0 & \delta & 0 & 0 \\ 0 & 0 & 0 & 0 \\ 0 & 0 & 0 & 0 \end{bmatrix}.
\end{array}
\end{equation}
We define the state space $Z=L_2([0,L], \mathbb{R}^4)$ and we equip it with the energy inner product
\begin{equation}
\langle z_1, z_2\rangle_Z = \langle z_1, \mathcal{H}z_2\rangle_{L_2} = \int_0^Lz_1^T\mathcal{H}z_2d\xi
\end{equation}
such that the state's norm is defined as $||z||_Z=\sqrt{\langle z, z\rangle_Z}$ while the energy writes
\begin{equation}
    E=\frac{1}{2}\langle z, z\rangle_Z.
\end{equation}
Following \cite{LeGorrec2005}, we define the boundary flow and effort as a composition of the co-energy variables at the boundary of the spatial domain
\begin{equation}
\begin{bmatrix} f_{\partial}(t) \\ e_{\partial}(t) \end{bmatrix} = \frac{1}{\sqrt{2}}\begin{bmatrix} P_1 & -P_1 \\ I & I \end{bmatrix}\begin{bmatrix} (\mathcal{H}z)(0,t) \\ (\mathcal{H}z)(L,t) \end{bmatrix}.
\end{equation}
The boundary flow and effort are instrumental to define the boundary operators such to obtain a well-posed (in the Hadamard sense) set of PDEs
\begin{equation}
\begin{array}{c}
\mathcal{B}_1z(t) = W_{B1}\begin{bmatrix} f_{\partial}(t) \\ e_{\partial}(t) \end{bmatrix} =\begin{bmatrix} \frac{1}{\rho(0)}z_1(0,t) \\ \frac{1}{I_\rho(0)}z_2(0,t) \end{bmatrix}  \\ 
\mathcal{B}_2z(t) = W_{B2}\begin{bmatrix} f_{\partial}(t) \\ e_{\partial}(t) \end{bmatrix} = \begin{bmatrix} -K(L)z_3(L,t) \\ -EI(L)z_4(L,t)
\end{bmatrix} \\ 
\mathcal{C}_1z(t) = W_{C1}\begin{bmatrix} f_{\partial}(t) \\ e_{\partial}(t) \end{bmatrix} =\begin{bmatrix} K(0)z_3(0,t) \\ EI(0)z_4(0,t)
\end{bmatrix} \\
\mathcal{C}_2z(t) = W_{C2}\begin{bmatrix} f_{\partial}(t) \\ e_{\partial}(t) \end{bmatrix} = \begin{bmatrix}\frac{1}{\rho(L)}z_1(L,t) \\ \frac{1}{I_\rho(L)}z_2(L,t) \end{bmatrix} 
\end{array}
\end{equation}
with
\begin{equation}
\begin{array}{ll}
W_{B1} = -\frac{1}{\sqrt{2}}\begin{bsmallmatrix} 0 & 0 & 1 & 0 & 1 & 0 & 0 & 0 \\ 0 & 0 & 0 & 1 & 0 & 1 & 0 & 0 \end{bsmallmatrix} & W_{B2} = \frac{1}{\sqrt{2}}\begin{bsmallmatrix} 1 & 0 & 0 & 0 & 0 & 0 & -1 & 0 \\ 0 & 1 & 0 & 0 & 0 & 0 & 0 & -1 \end{bsmallmatrix} \\
W_{C1} = \frac{1}{\sqrt{2}}\begin{bsmallmatrix} 1 & 0 & 0 & 0 & 0 & 0 & 1 & 0 \\ 0 & 1 & 0 & 0 & 0 & 0 & 0 & 1  \end{bsmallmatrix} & W_{C2} = \frac{1}{\sqrt{2}}\begin{bsmallmatrix} 0 & 0 & -1 & 0 & 1 & 0 & 0 & 0 \\ 0 & 0 & 0 & -1 & 0 & 1 & 0 & 0 \end{bsmallmatrix}.
\end{array}
\quad 
\end{equation}
We can now define the operator
\begin{equation}\label{eq:pHoperator}
\mathcal{J}z = P_1\frac{\partial}{\partial \xi}(\mathcal{H}z) + (P_0 - G_0)(\mathcal{H}z)
\end{equation}
with domain
\begin{equation}\label{eq:pHoperatorDomain}
D(\mathcal{J}) = \{z\in Z\mid \mathcal{H}z\in H^1, \; \mathcal{B}_1z= 0,  \mathcal{B}_2z = -S(L)\mathcal{C}_2z\}
\end{equation}
and $S = \text{diag}\{\gamma, \delta\}$. In the following proposition, we show that the operator $\mathcal{J}$ with domain $D(\mathcal{J})$ generates a contraction $C_0$-semigroup,  or equivalently that the dynamical system \eqref{eq:system} is well-posed.

\begin{proposition}
The operator $\mathcal{J}$ in \eqref{eq:pHoperator} with domain \eqref{eq:pHoperatorDomain} generates a contraction $C_0$-semigroup on the state space $Z$. Moreover, it is true that
\begin{equation}\label{eq:EnergyTimeDer}
\begin{array}{rl}
\dot{E}_+ = & \langle \mathcal{J}z, z \rangle_Z \\
= & -\mathlarger{\int}_0^L\left\{\frac{\gamma}{\rho^2}z_1^2  +\frac{\delta}{I_\rho^2}z_2^2\right\} d\xi - (\mathcal{C}_2z)^TS(L)\mathcal{C}_2z.
\end{array}
\end{equation}
\end{proposition}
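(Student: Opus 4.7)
The plan is to apply the Lumer--Phillips style characterisation developed for port-Hamiltonian systems. I would split $\mathcal{J} = \mathcal{J}_0 + B$, where
\begin{equation*}
\mathcal{J}_0 z = P_1 \partial_\xi(\mathcal{H}z) + P_0(\mathcal{H}z), \qquad B z = -G_0 \mathcal{H}z,
\end{equation*}
both carrying the domain $D(\mathcal{J})$. Since $\gamma,\delta,1/\rho,1/I_\rho \in L_\infty([0,L])$, the operator $B$ is bounded on $Z$ and manifestly dissipative: $\langle Bz,z\rangle_Z = -\int_0^L (\gamma/\rho^2\,z_1^2 + \delta/I_\rho^2\,z_2^2)\,d\xi \le 0$. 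It therefore suffices to show that $\mathcal{J}_0$ generates a contraction $C_0$-semigroup, after which the bounded perturbation theorem combined with the dissipativity of $B$ yields the same property for $\mathcal{J}$.

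For $\mathcal{J}_0$, I would invoke the Le Gorrec--Zwart--Maschke framework cited by the authors (\cite{LeGorrec2005,Jacob2012}), which reduces semigroup generation to a finite-dimensional algebraic check on the boundary matrix. One stacks the clamped condition $\mathcal{B}_1 z = 0$ together with the mixed condition $\mathcal{B}_2 z + S(L)\mathcal{C}_2 z = 0$ into a $4\times 8$ boundary matrix $W_B$ of full row rank, and verifies the positivity $W_B \Sigma W_B^\top \ge 0$ where $\Sigma = \begin{bsmallmatrix} 0 & I \\ I & 0 \end{bsmallmatrix}$. The diagonal positivity of $S(L) = \mathrm{diag}(\gamma(L),\delta(L))$ delivers this inequality, while the block structure of $W_{B1}$ and $W_{B2}+S(L)W_{C2}$ gives the rank condition; density of $D(\mathcal{J})$ in $Z$ follows from standard Sobolev-density arguments for $H^1$.

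To obtain the explicit expression \eqref{eq:EnergyTimeDer}, I would appeal to the Fr\'echet differentiability of $E(z) = \tfrac{1}{2}\langle z,z\rangle_Z$, so that \eqref{eq:WellStab:TimeDer} yields $\dot{E}_+ = \langle z,\mathcal{J}z\rangle_Z$. Writing $e = \mathcal{H}z$ and integrating by parts, the symmetry $P_1^\top = P_1$ and skew-symmetry $P_0^\top = -P_0$ produce
\begin{equation*}
\langle \mathcal{J}z,z\rangle_Z = \tfrac{1}{2}\bigl[e^\top P_1 e\bigr]_0^L - \int_0^L e^\top G_0 e\,d\xi.
\end{equation*}
The volume integral recovers the claimed $-\int_0^L(\gamma/\rho^2\,z_1^2 + \delta/I_\rho^2\,z_2^2)\,d\xi$. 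The boundary term at $\xi=0$ vanishes thanks to $\mathcal{B}_1 z = 0$, which forces $e_1(0) = e_2(0) = 0$, and substituting $\mathcal{B}_2 z = -S(L)\mathcal{C}_2 z$ at $\xi=L$ collapses the remaining contribution into $-(\mathcal{C}_2 z)^\top S(L)\mathcal{C}_2 z$.

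The main obstacle is the careful sign bookkeeping in the boundary term and the verification of the finite-dimensional positivity/rank condition on the combined boundary matrix $[W_{B1}^\top \; (W_{B2}+S(L)W_{C2})^\top]^\top$; once those are settled, the bounded-perturbation argument for adding back $B$ and the integration-by-parts identity are essentially mechanical.
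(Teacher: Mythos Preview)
Your argument is correct and the energy-derivative computation matches the paper's almost verbatim. The one genuine difference lies in how you establish generation: the paper simply invokes Theorem~6.9 of \cite{Villegas2007}, which already covers port-Hamiltonian operators of the form $P_1\partial_\xi(\mathcal{H}\cdot)+(P_0-G_0)(\mathcal{H}\cdot)$ with dissipative $G_0$ and boundary conditions encoded by a full-rank matrix $W_B$ satisfying $W_B\Sigma W_B^\top\ge 0$. You instead strip off $G_0$ as a bounded dissipative perturbation $B$, apply the Le~Gorrec--Zwart--Maschke criterion to the undamped operator $\mathcal{J}_0$, and then recover $\mathcal{J}=\mathcal{J}_0+B$ via the standard fact that a bounded dissipative perturbation of an m-dissipative operator remains m-dissipative. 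Both routes reduce to the same algebraic check on $W_B$; yours is slightly more hands-on and avoids relying on the version of the theorem that incorporates $G_0$, at the cost of one extra (routine) step. For the identity \eqref{eq:EnergyTimeDer} the two proofs coincide: split off $-\int_0^L(\mathcal{H}z)^\top G_0(\mathcal{H}z)\,d\xi$, integrate the $P_1$ term by parts, use $P_0^\top=-P_0$, and evaluate the boundary bilinear form $\tfrac12[e^\top P_1 e]_0^L$ using $\mathcal{B}_1z=0$ and $\mathcal{B}_2z=-S(L)\mathcal{C}_2z$.
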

\begin{proof}
For the generation result, is sufficient to use Theorem 6.9 of \cite{Villegas2007}. For the energy time derivative, we compute
\begin{equation}
\begin{array}{rl}
\dot{E}_+(z) = & dE(z)\mathcal{J}z = \langle \mathcal{J}z, z \rangle_Z \\
= & \mathlarger{\int}_0^L\left( P_1\frac{\partial}{\partial \xi}(\mathcal{H}z) + (P_0 - G_0)(\mathcal{H}z)\right)^T\mathcal{H}zd\xi \\
= & -\mathlarger{\int}_0^L(\mathcal{H}z)^TG_0(\mathcal{H}z)d\xi \\ & + \mathlarger{\int}_0^L\left( P_1\frac{\partial}{\partial \xi}(\mathcal{H}z) + P_0(\mathcal{H}z)\right)^T(\mathcal{H}z)d\xi .
\end{array}
\end{equation}
We notice that the first term of the last equation corresponds to the first term in \eqref{eq:EnergyTimeDer}, while the second term, after integration by parts, makes appear the second term in \eqref{eq:EnergyTimeDer}.
\end{proof}

Next, we present two inequalities that will be useful in the consequent stability analysis with Lyapunov arguments.

\begin{lemma}\label{lem:TIMOinequalities}
For any function $z_3,z_4\in L_2([0,L], \mathbb{R})$, the following inequalities hold through
\begin{equation}
\int_0^L\left(\int_0^\xi Kz_3ds\right)^2d\xi \leq  k_1\int_0^LKz_3^2d\xi,
\end{equation}
\begin{equation}
\int_0^L\left(\int_0^\xi EIz_4ds\right)^2d\xi \leq k_2\int_0^LEIz_4^2d\xi.
\end{equation}
with $k_1 = \left(\frac{2L}{\pi}\right)^2\bar{K}$ and $k_2 = \left(\frac{2L}{\pi}\right)^2\bar{EI}$, where $\bar{K} = \esssup \limits_{\xi\in[0,L]} K(\xi)$ and $\bar{EI} = \esssup \limits_{\xi\in[0,L]} EI(\xi)$.
\end{lemma}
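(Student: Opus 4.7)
The plan is to apply the Wirtinger-type inequality of Lemma \ref{lem:Wirtinger} to the antiderivative of $Kz_3$ (respectively $EIz_4$), and then absorb one factor of $K$ (respectively $EI$) into its essential supremum.

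First I would define $f(\xi) := \int_0^\xi K(s) z_3(s)\,ds$. Because $K \in L_\infty([0,L])$ and $z_3 \in L_2([0,L])$, the product $Kz_3$ lies in $L_2 \subset L_1$, so $f$ is absolutely continuous on $[0,L]$ with $f(0) = 0$ and derivative $f'(\xi) = K(\xi) z_3(\xi)$ almost everywhere. Lemma \ref{lem:Wirtinger} therefore applies and yields
\begin{equation*}
\int_0^L \left(\int_0^\xi Kz_3 \,ds\right)^2 d\xi
\;=\; \int_0^L f(\xi)^2 d\xi
\;\leq\; \left(\frac{2L}{\pi}\right)^2 \int_0^L K(\xi)^2 z_3(\xi)^2 d\xi.
\end{equation*}

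Next, using that $K$ is positive and essentially bounded, I would factor one copy of $K$ out and bound it by $\bar{K}$:
\begin{equation*}
\int_0^L K(\xi)^2 z_3(\xi)^2 d\xi
\;\leq\; \bar{K} \int_0^L K(\xi) z_3(\xi)^2 d\xi.
\end{equation*}
Combining the two displays gives exactly the first inequality, with the constant $k_1 = (2L/\pi)^2 \bar{K}$ as stated. The second inequality follows by the identical argument, replacing $K$ by $EI$ and $z_3$ by $z_4$; the boundedness and positivity assumptions on $EI$ listed just before \eqref{eq:Variable definition} give the analogous constant $k_2 = (2L/\pi)^2 \overline{EI}$.

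The proof is essentially mechanical; the only subtle point is justifying the use of Lemma \ref{lem:Wirtinger}, which requires absolute continuity of $f$. Since this is immediate from $Kz_3 \in L_1([0,L])$, there is no real obstacle and the argument is completely elementary.
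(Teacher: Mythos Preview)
Your proof is correct and follows essentially the same route as the paper: apply the Wirtinger inequality of Lemma~\ref{lem:Wirtinger} to the antiderivative $f(\xi)=\int_0^\xi Kz_3\,ds$, then bound one factor of $K$ by $\bar{K}$. If anything, your version is slightly more careful in checking the absolute continuity hypothesis of Lemma~\ref{lem:Wirtinger}; the paper simply writes the two chained inequalities without comment.
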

\begin{proof}
To obtain the first inequality we apply Wirtinger's inequality of Lemma \ref{lem:Wirtinger}
\begin{equation}
\begin{array}{rl}
\int_0^L\left(\int_0^\xi Kz_3ds\right)^2d\xi \leq & \left(\frac{2L}{\pi}\right)^2 \int_0^L (Kz_3)^2 d\xi \\
\leq & \left(\frac{2L}{\pi}\right)^2\bar{K}\int_0^L K(z_3)^2 d\xi.
\end{array}
\end{equation}
The second inequality can be obtained in exactly the same manner.
\end{proof}

\subsection{Stability analysis}
The aim of this section is to find an appropriate Lyapunov function allowing to show the exponential stability of the system and to explicit its decay rate. The Lyapunov function will be composed of the natural energy of the system together with two cross-coupling terms. More precisely we define the Lyapunov function as
\begin{equation}\label{eq:LyapunovFunc}
V = n_0E + n_1F_1 + n_2F_2
\end{equation}
with $n_1,n_2>0$ while $F_1,F_2$ are defined as
\begin{equation}\label{eq:Cross_functions}
\begin{array}{ll}
F_1 = \mathlarger{\int}_0^Lz_1\left(\mathlarger{\int}_0^\xi Kz_3ds\right)d\xi, &
F_2 = \mathlarger{\int}_0^Lz_2\left(\mathlarger{\int}_0^\xi EIz_4ds\right)d\xi
\end{array}
\end{equation}
\begin{lemma}
For any state $z\in Z$ the Lyapunov function \eqref{eq:LyapunovFunc} is well-defined, \textit{i.e.} it is finite in all the state space $Z$.
\end{lemma}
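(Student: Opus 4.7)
The plan is to show each of the three summands in $V$ is finite on $Z=L_2([0,L],\mathbb{R}^4)$, which reduces the claim to three routine estimates that combine the boundedness of the physical parameters (hence of $\mathcal{H}$) with the preliminary inequalities already at our disposal.

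First I would handle the energy term. Since $\rho^{-1},I_\rho^{-1},K,EI$ are all in $L_\infty([0,L])$ by assumption, the energy inner product is dominated by the standard $L_2$ inner product: there exists $c>0$ such that $E(z)=\tfrac12\int_0^L z^T\mathcal{H}z\,d\xi\le c\|z\|_{L_2}^2<\infty$ for every $z\in Z$. So $n_0 E(z)$ is finite.

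Next I would bound the cross terms $F_1$ and $F_2$. By the Cauchy--Schwarz inequality,
\begin{equation*}
|F_1|\le \left(\int_0^L z_1^2\,d\xi\right)^{\!1/2}\!\left(\int_0^L\!\left(\int_0^\xi K z_3\,ds\right)^{\!2}d\xi\right)^{\!1/2}.
\end{equation*}
The first factor is finite because $z_1\in L_2([0,L])$. For the second factor I would invoke Lemma \ref{lem:TIMOinequalities} to get the estimate $\int_0^L\big(\int_0^\xi K z_3\,ds\big)^2 d\xi\le k_1\int_0^L K z_3^2\,d\xi$, which is itself finite since $K\in L_\infty$ and $z_3\in L_2$. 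Hence $|F_1|<\infty$. The estimate for $F_2$ is completely analogous, replacing $(z_1,z_3,K,k_1)$ by $(z_2,z_4,EI,k_2)$ and using the second inequality of Lemma \ref{lem:TIMOinequalities}.

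Putting these three bounds together with the triangle inequality gives $V(z)\le n_0\,c\,\|z\|_{L_2}^2+n_1|F_1|+n_2|F_2|<\infty$ for every $z\in Z$. There is no real obstacle here; the point of the lemma is simply to check that the two auxiliary functionals $F_1,F_2$, which contain inner integrals, are integrable against $z_1,z_2\in L_2$. The Wirtinger-type bound provided by Lemma \ref{lem:TIMOinequalities} is precisely what makes this check transparent, since without it one might worry that $\int_0^\xi K z_3\,ds$ is only known to be continuous and not a priori in $L_2$.
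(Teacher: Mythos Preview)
Your proof is correct and follows essentially the same route as the paper: bound the energy term using the $L_\infty$ assumption on the coefficients, then control each cross term $F_i$ by first separating the product (you use Cauchy--Schwarz where the paper uses Young's inequality with $\alpha=1$, which are interchangeable here) and then invoking Lemma~\ref{lem:TIMOinequalities} to bound the inner integral. One minor remark: your closing comment that without the lemma $\int_0^\xi Kz_3\,ds$ might fail to be in $L_2$ is not quite right, since a continuous function on $[0,L]$ is automatically square integrable; the lemma is useful later for the explicit constants, not for mere finiteness.
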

\begin{proof}
The energy term $E$ in \eqref{eq:LyapunovFunc} is bounded as soon as $z\in Z$. The function $F_1$ can be bounded by using firstly the \textit{Young's inequality} and secondly Lemma \ref{lem:TIMOinequalities}
\begin{equation}
\begin{array}{rl}
\mathlarger{\int}_0^Lz_1\left(\mathlarger{\int}_0^\xi Kz_3ds\right)d\xi \leq &\displaystyle \frac{1}{2}\int_0^L \left(\int_0^\xi Kz_3 ds\right)^2d\xi  \\ & \displaystyle +\frac{1}{2}\int_0^Lz_1^2d\xi \\
\displaystyle \leq & \displaystyle \frac{1}{2}k_1\int_0^L Kz_3^2d\xi + \frac{1}{2}\int_0^Lz_1^2d\xi
\end{array}
\end{equation}
which is bounded as soon as $z\in Z$. The term $F_2$ can be bounded in a very similar manner.
\end{proof}
Since the objective of this Lyapunov study is to obtain an inequality of the type $\dot{V}_+\leq -\kappa_2 V$,  the choice of the $F_1, F_2$ crossing terms is justified by the need of making appear the missing negative square terms in the time derivative of the Lyapunov functional. Similarly, as in \cite{Almeida2013}, the general idea comes from the fact that for $i\in \{ 1,2,3,4\}$
\begin{equation}
\displaystyle\int_0^L\frac{\partial z_i}{\partial \xi}\left(\int_0^\xi z_i ds\right)d\xi = \displaystyle\left[z_i\int_0^\xi z_ids\right]_0^L  \displaystyle - \int_0^Lz_i^2d\xi.
\end{equation}

In the next proposition, we show that the functional $V$ is positive definite and bounded by the energy if the constants $n_0,n_1,n_2$ are chosen appropriately.

\begin{proposition}\label{prop:Lyap_Ineqs}
For all $n_0,n_1,n_2>0$, the Lyapunov function $V$ in \eqref{eq:LyapunovFunc} is such that:
\begin{enumerate}[i)]
\item $V(z)\geq \kappa_1 ||z||^2$ for all $z\in Z$, with $\kappa_1 = \min\{\left(\frac{n_0}{2} - \frac{n_1\bar{\rho}}{2} \right), \left(\frac{n_0}{2} - \frac{n_2\bar{I}_\rho}{2} \right), \left(\frac{n_0}{2}  - \frac{n_1k_1}{2} \right), \left(\frac{n_0}{2} - \frac{n_2k_2}{2}\right)\}$, with $\bar{\rho} = \esssup \limits_{\xi\in [0,L]} \rho(\xi)$ and $\bar{I}_\rho = \esssup \limits_{\xi\in [0,L]} I_\rho(\xi)$.
\item $V(z)\leq \eta E$ for all $z\in Z$, with $\eta=\max\{\left(n_0 + n_1\bar{\rho} \right),\left(n_0 + n_2\bar{I}_\rho \right),\left(n_0 + n_1k_1 \right),\left(n_0  + n_2k_2 \right)\}$.
\end{enumerate}
\end{proposition}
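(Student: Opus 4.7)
The proof is a direct computation: both inequalities follow from expanding $V$, bounding the crossing terms $F_1,F_2$ from below and above by pieces of the energy, and then collecting coefficients term by term.

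For the lower bound (i), I start from the definition
\[
V=\frac{n_0}{2}\int_0^L\!\left(\frac{z_1^2}{\rho}+\frac{z_2^2}{I_\rho}+Kz_3^2+EIz_4^2\right)d\xi+n_1F_1+n_2F_2,
\]
and I need lower bounds on $F_1$ and $F_2$. By Young's inequality with $\alpha=1$ one gets
\[
F_1\geq -\tfrac12\!\int_0^L\!z_1^2\,d\xi-\tfrac12\!\int_0^L\!\Big(\!\int_0^\xi\!Kz_3\,ds\Big)^{\!2}d\xi,
\]
and Lemma \ref{lem:TIMOinequalities} replaces the second term by $\tfrac{k_1}{2}\int_0^L Kz_3^2\,d\xi$. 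To align the first term with the energy I use the pointwise bound $z_1^2=\rho\cdot\frac{z_1^2}{\rho}\leq\bar\rho\cdot\frac{z_1^2}{\rho}$, giving
\[
F_1\geq -\tfrac{\bar\rho}{2}\!\int_0^L\!\tfrac{z_1^2}{\rho}\,d\xi-\tfrac{k_1}{2}\!\int_0^L\!Kz_3^2\,d\xi.
\]
The analogous inequality holds for $F_2$ with $\bar I_\rho$ and $k_2$. Substituting these into $V$ and grouping the four squared terms yields
\[
V\geq\sum_{i=1}^{4}c_i\int_0^L(\mathcal H z)_i z_i\,d\xi,
\]
with $c_i$ exactly the four expressions inside the $\min$ defining $\kappa_1$. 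Bounding each $c_i$ below by $\kappa_1$ and noting $\|z\|_Z^2=\int_0^L z^T\mathcal Hz\,d\xi$ gives $V\geq\kappa_1\|z\|^2$.

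For the upper bound (ii), I repeat the same chain of inequalities but with the upper-bound side of Young's inequality,
\[
F_1\leq \tfrac{\bar\rho}{2}\!\int_0^L\!\tfrac{z_1^2}{\rho}\,d\xi+\tfrac{k_1}{2}\!\int_0^L\!Kz_3^2\,d\xi,
\]
and analogously for $F_2$. Plugging in, the coefficients of the four energy pieces become $\tfrac{n_0}{2}+\tfrac{n_1\bar\rho}{2}$, $\tfrac{n_0}{2}+\tfrac{n_2\bar I_\rho}{2}$, $\tfrac{n_0}{2}+\tfrac{n_1k_1}{2}$, $\tfrac{n_0}{2}+\tfrac{n_2k_2}{2}$. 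Bounding each by half of $\eta$ and recognising $\int_0^L z^T\mathcal Hz\,d\xi=2E$ gives $V\leq \eta E$.

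The argument is essentially bookkeeping, so there is no single hard step; the only place to be careful is the sign when bounding $F_1,F_2$ from below (since the crossing terms are not sign‑definite), which is why Young's inequality is applied without a free $\alpha$ and why the absolute uniform bounds $\bar\rho,\bar I_\rho$ (together with the fact that all physical parameters are essentially bounded, as assumed just before \eqref{eq:Variable definition}) are needed to convert $\int z_1^2\,d\xi$ and $\int z_2^2\,d\xi$ into energy contributions.
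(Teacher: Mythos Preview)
Your proof is correct and follows essentially the same route as the paper's own argument: apply Young's inequality with $\alpha=1$ to split the crossing terms $F_1,F_2$, invoke Lemma~\ref{lem:TIMOinequalities} on the double integrals, convert $\int z_1^2$ and $\int z_2^2$ into energy pieces via the pointwise bounds $\rho\le\bar\rho$, $I_\rho\le\bar I_\rho$, and then collect coefficients. The only cosmetic difference is that the paper bounds $V$ directly in one chain of inequalities, whereas you first isolate bounds on $F_1,F_2$ and then substitute; the resulting constants are identical.
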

\begin{proof}
i) We apply \textit{Young's inequality} (with $\alpha=1$ and $f$ replaced with $-f$) to get
\begin{equation}\label{eq:Lyap_lowbound}
\begin{array}{rl}
V\geq & \mathlarger{\int}_0^L\left\{ \left(\frac{n_0}{2} - \frac{n_1\rho}{2}\right)\frac{z_1^2}{\rho} + \left(\frac{n_0}{2} - \frac{n_2I_\rho}{2}\right)\frac{z_2^2}{I_\rho}\right. \\
& \frac{n_0}{2} Kz_3^2 + \frac{n_0}{2} EIz_4^2 - \frac{n_1}{2}\left(\mathlarger{\int}_0^\xi Kz_3ds\right)^2 \\ & \left. - \frac{n_2}{2}\left(\mathlarger{\int}_0^\xi EIz_4ds\right)^2\right\} d\xi \\
\geq & \mathlarger{\int}_0^L \overbrace{\mathsmaller{\left(\frac{n_0}{2} - \frac{n_1\bar{\rho}}{2} \right)}}^{a_1}\frac{z_1^2}{\rho} + \overbrace{\mathsmaller{\left(\frac{n_0}{2} - \frac{n_2\bar{I}_\rho}{2} \right)}}^{a_2}\frac{z_2^2}{I_\rho} \\
&  \overbrace{\mathsmaller{\left(\frac{n_0}{2}  - \frac{n_1k_1}{2} \right)}}^{a_3}Kz_3^2 + \overbrace{\mathsmaller{\left(\frac{n_0}{2} - \frac{n_2k_2}{2}\right)}}^{a_4}EIz_4^2 d\xi
\end{array}
\end{equation}
where Lemma \ref{lem:TIMOinequalities} has been applied to obtain the second inequality. 
Defining $\kappa_1 = \min\{a_1,a_2,a_3,a_4\}$ we obtain the inequality of item i).\\
ii) We apply \textit{Cauchy-Swartz} and \textit{Young's Inequalities} with $\alpha=1$ to get
\begin{equation}\label{eq:Lyap_upbound}
\begin{array}{rl}
V\leq & \mathlarger{\int}_0^L \left\{ \left(\frac{n_0}{2} + \frac{n_1\rho}{2} \right)\frac{z_1^2}{\rho} + \left(\frac{n_0}{2} + \frac{n_2I_\rho}{2} \right)\frac{z_2^2}{I_\rho} \right.\\
& \frac{n_0}{2} Kz_3^2 + \frac{n_0}{2} EIz_4^2 + \frac{n_1}{2}\left(\mathlarger{\int}_0^\xi Kz_3ds\right)^2 \\ & \left. + \frac{n_2}{2}\left(\mathlarger{\int}_0^\xi EIz_4ds\right)^2\right\} d\xi \\
\leq & \frac{1}{2}\mathlarger{\int}_0^L \overbrace{\mathsmaller{\left(n_0 + n_1\bar{\rho} \right)}}^{b_1}\frac{z_1^2}{\rho} + \overbrace{\mathsmaller{\left(n_0 + n_2\bar{I}_\rho \right)}}^{b_2}\frac{z_2^2}{I_\rho}\\
& \overbrace{\mathsmaller{\left(n_0 + n_1k_1 \right)}}^{b_3}Kz_3^2 + \overbrace{\mathsmaller{\left(n_0  + n_2k_2 \right)}}^{b_4}EIz_4^2d\xi,
\end{array}
\end{equation}
where Lemma \ref{lem:TIMOinequalities} has been applied to obtain the second inequality. We define the constant $\eta=\max\{b_1,b_2,b_3,b_4\}$ to obtain the inequality of item ii).
\end{proof}

In the following theorem, we present the main result of this paper, \textit{i.e.} we show the exponential stability of the Timoshenko beam model with viscous damping making use of the Lyapunov function in \eqref{eq:LyapunovFunc}.

\begin{theorem}\label{thm:main}
Consider the Timoshenko's beam equation with space-varying parameters \eqref{eq:system} and the Lyapunov functional $V$ in \eqref{eq:LyapunovFunc}, then the norm of the $C_0$-semigroup generated by the operator \eqref{eq:pHoperator}-\eqref{eq:pHoperatorDomain} can be bounded by
\begin{equation}\label{eq:estimation}
||z(t,z_0)||\leq \sqrt{\frac{V(z_0)}{\kappa_1}}e^{-\frac{\kappa_2}{2}t}
\end{equation}
where $\kappa_1>0$ is defined in point i) of Proposition \ref{prop:Lyap_Ineqs}, $\kappa_2 = \frac{\beta}{\eta} >0$ with $\eta$ defined in point ii) of Proposition \ref{prop:Lyap_Ineqs} and $\beta = \min\{\underline{c}_1,\underline{c}_2,\underline{c}_3,\underline{c}_4\}>0$ with $c_i$ defined in \eqref{eq:functions_ci} and $\underline{c}_i = \essinf\limits_{\xi\in [0,L]} c_i(\xi)$ with $i\in \{1,2,3,4\}$.
\end{theorem}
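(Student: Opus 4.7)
The plan is to verify the hypotheses of Theorem \ref{thm_GenExp} for the Lyapunov functional $V$ defined in \eqref{eq:LyapunovFunc}. Proposition \ref{prop:Lyap_Ineqs}(i) already supplies the coercivity $V(z)\ge\kappa_1\|z\|^2$, and $V(0)=0$ is immediate. Since $V$ is a quadratic, Fr\'echet-differentiable functional on $Z$, it is enough, by \eqref{eq:WellStab:TimeDer}, to establish the dissipation inequality $\dot V_+(z)\le-\kappa_2 V(z)$ on $D(\mathcal J)$. The strategy is to (a) compute $\dot V_+=n_0\dot E_++n_1\dot F_1+n_2\dot F_2$ piece by piece; (b) produce a pointwise-in-$\xi$ estimate of the form $\dot V_+\le-\int_0^L\{c_1 z_1^2/\rho+c_2 z_2^2/I_\rho+c_3 Kz_3^2+c_4\,EI z_4^2\}d\xi$ with positive, essentially bounded coefficient functions $c_i(\xi)$; (c) set $\beta=\min_i\underline c_i$ and use Proposition \ref{prop:Lyap_Ineqs}(ii) to trade the resulting weighted-norm bound for $-(\beta/\eta)V$, which is the hypothesis required by Theorem \ref{thm_GenExp}.

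The contribution $n_0\dot E_+$ is handed over by \eqref{eq:EnergyTimeDer} and already supplies the negative interior terms $-n_0\gamma z_1^2/\rho^2$, $-n_0\delta z_2^2/I_\rho^2$ together with the non-positive boundary piece $-n_0(\mathcal C_2 z)^T S(L)\mathcal C_2 z$. For $\dot F_1$ I would differentiate under the integral,
\begin{equation*}
\dot F_1=\int_0^L\dot z_1\Bigl(\int_0^\xi Kz_3\,ds\Bigr)d\xi+\int_0^L z_1\Bigl(\int_0^\xi K\dot z_3\,ds\Bigr)d\xi,
\end{equation*}
and substitute $\dot z_1=\partial_\xi(Kz_3)-(\gamma/\rho)z_1$ and $\dot z_3=\partial_\xi(z_1/\rho)-z_2/I_\rho$ read off from rows 1 and 3 of \eqref{eq:system}. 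The pivotal structural identity is
\begin{equation*}
\int_0^L\partial_\xi(Kz_3)\Bigl(\int_0^\xi Kz_3\,ds\Bigr)d\xi=K(L)z_3(L)\!\int_0^L\!Kz_3\,ds-\int_0^L(Kz_3)^2 d\xi,
\end{equation*}
which manufactures the favourable interior contribution $-\int(Kz_3)^2 d\xi$ that $\dot E_+$ does not provide. An inner integration by parts on $\int_0^\xi K\partial_s(z_1/\rho)\,ds$ produces a further $\int Kz_1^2/\rho\,d\xi$ plus a $K'$-residue, while the remaining mixed pieces $\int(\gamma/\rho)z_1\!\int_0^\xi Kz_3\,ds$, $\int(z_2/I_\rho)\!\int_0^\xi Kz_3\,ds$, etc., are controlled by Young's inequality, the nested squared integrals being re-absorbed into $k_1\int Kz_3^2 d\xi$ via Lemma \ref{lem:TIMOinequalities}. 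The computation for $\dot F_2$ is perfectly symmetric and furnishes the missing $-\int EIz_4^2\,d\xi$ term.

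Boundary residues at $\xi=L$ are handled through the damping conditions built into $D(\mathcal J)$, namely $K(L)z_3(L)=\gamma(L)z_1(L)/\rho(L)$ and $EI(L)z_4(L)=\delta(L)z_2(L)/I_\rho(L)$, which rewrite every endpoint contribution of $\dot F_1$ and $\dot F_2$ as a weighted square of $z_1(L)$ or $z_2(L)$; combined with the non-positive $-n_0(\mathcal C_2 z)^T S(L)\mathcal C_2 z$ and one further Young splitting, they remain non-positive provided $n_1,n_2$ are sufficiently small with respect to $n_0$. Collecting every interior and boundary contribution yields the advertised integral bound with explicit $c_i(\xi)$ depending on $n_0,n_1,n_2$, the Young parameters, $\bar K,\bar{EI}$ and the physical profiles; these are the functions referenced by \eqref{eq:functions_ci}. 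Theorem \ref{thm_GenExp} then delivers \eqref{eq:estimation}.

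The main obstacle I anticipate is the simultaneous feasibility of the conditions on $n_0,n_1,n_2$ and the Young parameters hidden inside the $c_i$. The natural dissipation in $\dot E_+$ only produces negative $z_1^2,z_2^2$ contributions, so positivity of $c_3,c_4$ is manufactured entirely by the $-\int(Kz_3)^2,-\int(EIz_4)^2$ identities, forcing $n_1,n_2$ to be not too small; at the same time they must stay small enough that the positive $\int Kz_1^2/\rho$ (and its $z_2$ analogue) is absorbed by $-n_0\gamma z_1^2/\rho^2$, that $\kappa_1>0$ in Proposition \ref{prop:Lyap_Ineqs}(i), and that the endpoint residues remain dominated by the dissipative boundary term of $\dot E_+$. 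Exhibiting a non-empty feasibility region and extracting an explicit essential lower bound for each $c_i$ is the genuinely technical step and is what ultimately determines the decay rate $\kappa_2=\beta/\eta$ quoted in the statement.
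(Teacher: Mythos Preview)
Your proposal is correct and follows essentially the same route as the paper: differentiate $F_1,F_2$, use the integration-by-parts identity to manufacture the missing $-\int(Kz_3)^2$ and $-\int(EIz_4)^2$ terms, absorb the mixed and nested pieces with Young/Cauchy--Schwarz and the Wirtinger-type bounds, rewrite the $\xi=L$ residues via the dissipative boundary conditions, and close with Proposition~\ref{prop:Lyap_Ineqs}(ii) and Theorem~\ref{thm_GenExp}. The only point you leave open---the feasibility of the parameter constraints---is resolved in the paper by a sequential selection: fix $n_2>0$, take the Young parameters $\alpha_2,\alpha_3$ large enough for $c_4>0$, then $\alpha_1$ and $n_1$ large enough for $c_3>0$, and finally $n_0$ large enough to render $c_1,c_2,c_5,c_6$ and $\kappa_1$ positive; this shows the feasibility region is indeed non-empty without any delicate balancing.
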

\begin{proof}
We start by computing the estimates of the Dini's time derivative of the functionals $F_1,\; F_2$ composing the Lyapunov functional in \eqref{eq:LyapunovFunc}
\begin{equation}
\begin{array}{rl}
\dot{F}_{1,+} = &  \mathlarger{\int}_0^L\left\{\left( \frac{\partial }{\partial \xi}\left( Kz_3\right) - \frac{\gamma}{\rho}z_1 \right)\left( \mathlarger{\int}_0^\xi Kz_3ds \right) \right.  \\ & \hfill \left.+ z_1\left(\mathlarger{\int}_0^\xi K\left(\frac{\partial}{\partial s}\left(\frac{z_1}{\rho}\right) - \frac{z_2}{I_\rho}\right)ds\right) \right\}d\xi\\
= &   \mathlarger{\int}_0^L\left\{ \frac{\partial}{\partial \xi}(Kz_3)\left(\mathlarger{\int}_0^\xi Kz_3 ds\right) - \frac{\gamma}{\rho}z_1\left(\mathlarger{\int}_0^\xi Kz_3ds\right) \right. \\ &  \left.+z_1\mathlarger{\int}_0^\xi K\frac{\partial}{\partial s}\left(\frac{z_1}{\rho}\right)ds - z_1\left(\mathlarger{\int}_0^\xi\frac{K}{I_\rho}z_2ds\right) \right\}d\xi.
\end{array} 
\end{equation}
We apply integration by parts on the first and third terms while using \textit{Cauchy-Schwartz} in the second and fourth terms
\begin{equation}
\begin{array}{rl}
\dot{F}_{1,+} \leq &    \left[Kz_3\mathlarger{\int}_0^\xi Kz_3ds \right]_0^L -\mathlarger{\int}_0^L\left(Kz_3 \right)^2d\xi \\
&  + \left(\mathlarger{\int}_0^L\left(\frac{\gamma}{\rho}z_1\right)^2d\xi\right)^{\frac{1}{2}}\left(\mathlarger{\int}_0^L\left(\mathlarger{\int}_0^\xi Kz_3ds\right)^2d\xi \right)^{\frac{1}{2}} \\ &   +\mathlarger{\int}_0^Lz_1\left(\left[\frac{K}{\rho}z_1\right]_0^\xi - \mathlarger{\int}_0^\xi\frac{z_1}{\rho}\frac{d K}{d s} ds\right) d\xi \\
&   + \left(\mathlarger{\int}_0^L z_1^2d\xi\right)^{\frac{1}{2}}\left(\mathlarger{\int}_0^L\left(\mathlarger{\int}_0^\xi \frac{K}{I_\rho}z_2ds\right)^2d\xi \right)^{\frac{1}{2}}.
\end{array}
\end{equation}
We define the parameter $K_d = \frac{d K}{d s}$ while using Lemma \ref{lem:Wirtinger} and the \textit{Young's inequality} to obtain
\begin{equation}
\begin{array}{rl}
\dot{F}_{1,+} \leq &   K(L)z_3(L,t)\mathlarger{\int}_0^LKz_3d\xi - \mathlarger{\int}_0^L(Kz_3)^{2}d\xi \\ & + \left( \mathlarger{\int}_0^L\left(\frac{\gamma}{\rho}z_1\right)^2d\xi\right)^\frac{1}{2} \left(\left(\frac{2L}{\pi}\right)^2\mathlarger{\int}_0^L\left(Kz_3 \right)^2d\xi\right)^{\frac{1}{2}} \\ 
&    +\mathlarger{\int}_0^L\frac{K}{\rho}z_1^2d\xi - \mathlarger{\int}_0^Lz_1\frac{K(0)}{\rho(0)}z_1(0,t)d\xi  \\
&   - \mathlarger{\int}_0^Lz_1 \left( \mathlarger{\int}_0^\xi \frac{K_d}{\rho}z_1ds\right) d\xi + \left(\mathlarger{\int}_0^L z_1^2d\xi\right)^{\frac{1}{2}} \\ &  \cdot \left(\left(\frac{2L}{\pi}\right)^2\mathlarger{\int}_0^L\left(\frac{K}{I_\rho}z_2 \right)^2d\xi\right)^{\frac{1}{2}}
\end{array}
\end{equation}
then, using again the \textit{Young's inequality} together with \textit{Cauchy-Schwartz}, Lemma \ref{lem:Wirtinger} and the boundary conditions $\mathcal{B}_1z = 0$ we get
\begin{equation}\label{eq: F1 derivative}
\begin{array}{rl}
\dot{F}_{1,+} \leq   &  \frac{L}{2}(K(L)z_3(L,t))^2 + \mathlarger{\int}_0^L \left\{\frac{1}{2}(Kz_3)^2-(Kz_3)^2 \right. \\ &  + \frac{\alpha_1}{2}\left(\frac{\gamma}{\rho}z_1\right)^2  + \frac{1}{2\alpha_1}\left(\frac{2L}{\pi} \right)^2\left(Kz_3\right)^2  \\ 
& + \left. \frac{K}{\rho}z_1^2 + \frac{1}{2}z_1^2 + \frac{1}{2}\left(\frac{2L}{\pi}\right)^2\left(\frac{K}{I_\rho}z_2\right)^2\right\} d\xi \\
&   +\left(\mathlarger{\int}_0^Lz_1^2d\xi\right)^{\frac{1}{2}}\left(\mathlarger{\int}_0^L\left(\mathlarger{\int}_0^\xi \frac{K_d}{\rho}z_1ds\right)^2d\xi\right)^{\frac{1}{2}}\\
\leq &   \mathlarger{\int}_0^L \left\{ \left( \frac{\alpha_1\gamma^2}{2\rho} + K + \rho +\frac{1}{2\rho} \left(\frac{2LK_d}{\pi}\right)^2\right)\frac{z_1^2}{\rho} \right. \\
&+ \left. \frac{1}{2I_\rho}\left( \frac{2LK}{\pi}\right)^2\frac{z_2^2}{I_\rho} - \left(\frac{K}{2} -\frac{K}{2\alpha_1} \left(\frac{2L}{\pi} \right)^2\right)Kz_3^2 \right\} d\xi \\
&  + \frac{L}{2}(K(L)z_3(L,t))^2.
\end{array}
\end{equation}
With a very similar procedure as for $F_1$ we bound the $F_2$ time derivative with
\begin{equation}\label{eq: F2 derivative}
\begin{array}{rl}
\dot{F}_{2,+} = &   \mathlarger{\int}_0^L\left(\frac{\partial}{\partial \xi}\left(EIz_4\right) - Kz_3 - \frac{\delta}{I_\rho}z_2\right)\left(\mathlarger{\int}_0^\xi EIz_4ds\right) \\ &   \hfill + z_2\left(\mathlarger{\int}_0^\xi EI\frac{\partial}{\partial \xi}\left(\frac{1}{I_\rho}z_2\right)ds\right)d\xi \\
\leq &   \frac{L}{2}(EI(L)z_4(L,t))^2 + \mathlarger{\int}_0^L\left\{\frac{1}{2}(EIz_4)^2 - (EIz_4)^2 \right.\\ & + \frac{\alpha_2}{2}(Kz_3)^2  +\frac{(2L)^2}{2\alpha_2\pi^2}(EIz_4)^2 +\frac{\alpha_3}{2}\left(\frac{\delta}{I_\rho}z_2\right)^2  \\
&   +\frac{(2L)^2}{2\alpha_3\pi^2}(EIz_4)^2 +\frac{EI}{I_\rho}z_2^2  + \frac{1}{2}z_2^2 \\
&  \left. + \frac{1}{2}\left(\frac{2L}{\pi}\right)^2\left(\frac{EI_d}{I_\rho}z_2\right)^2\right\} d\xi  \\
 \leq &  \mathlarger{\int}_0^L \left\{\left( \frac{\alpha_3\delta^2}{2I_\rho}+ EI+ \frac{I_\rho}{2} + \frac{1}{2I_\rho}\left(\frac{2LEI_d}{\pi }\ \right)^2 \right)\frac{z_2^2}{I_\rho} \right.\\
&   +\frac{\alpha_2K}{2}Kz_3^2 -  \left( \frac{EI}{2} - \frac{EI(2L)^2}{2\alpha_2\pi^2} - \frac{EI(2L)^2}{2\alpha_3\pi^2} \right)\\
&  \left. \cdot EIz_4^2 \right\} d\xi  + \frac{L}{2}(EI(L)z_4(L,t))^2 
\end{array}
\end{equation}
where $EI_d = \frac{dEI}{d\xi}$ and $\alpha_1, \alpha_2, \alpha_3>0$ are constants to be determined later. We replace \eqref{eq: F1 derivative}, \eqref{eq: F2 derivative} and \eqref{eq:EnergyTimeDer} in the Lyapunov function's time derivative
\begin{equation}
\dot{V}_+ = n_0\dot{E}_+ + n_1\dot{F}_{1,+} + n_2\dot{F}_{2,+}
\end{equation}
and considering $\mathcal{B}_1z=0$, $\mathcal{B}_2z = -S(L)\mathcal{C}_2z$ we obtain
\begin{equation}\label{eq:LyapInequality}
\begin{array}{rl}
\dot{V}_{+} \leq & -\mathlarger{\int}_0^L\left\{ c_1\frac{z_1^2}{\rho} + c_2\frac{z_2}{I_\rho} + c_3Kz_3^2 + c_4EIz_4^2\right\} d\xi \\ 
& - c_5\left(\frac{z_1(L,t)}{\rho(L)}\right)^2  - c_6\left(\frac{z_2(L,t)}{I_\rho(L)}\right)^2 
\end{array}
\end{equation}
with functions
\begin{equation}\label{eq:functions_ci}
\begin{array}{rl}
c_1 = & \frac{n_0\gamma}{\rho^2} - \frac{n_1\alpha_1\gamma^2}{2\rho} - n_1K - n_1\rho - \frac{n_1}{2\rho} \left(\frac{2LK_I}{\pi}\right)^2 \\
c_2 = & \frac{n_0\delta}{I_\rho^2}-\frac{n_1}{2I_\rho}\left( \frac{2LK}{\pi}\right)^2 - \frac{n_2\alpha_3\delta^2}{2I_\rho} - n_2EI - n_2\frac{I_\rho}{2} \\
& - \frac{n_2}{2I_\rho}\left(\frac{2LEI_I}{\pi } \right)^2 \\
c_3 = & \frac{n_1K}{2} - \frac{n_1K}{2\alpha_1} \left(\frac{2L}{\pi} \right)^2 - \frac{n_2\alpha_2 K}{2} \\
c_4 = & \frac{n_2EI}{2} - \frac{n_2EI(2L)^2}{2\alpha_2\pi^2} - \frac{n_2EI(2L)^2}{2\alpha_3\pi^2} \\
c_5 = & n_0\gamma(L)  - \frac{L n_1\gamma(L)^2}{2}   \\
c_6 = & n_0\delta(L) - \frac{L n_2\delta(L)^2}{2} .
\end{array}
\end{equation}
Then, the constants $n_0,n_1,n_2$ and $\alpha_1,\; \alpha_2,\; \alpha_3$ could be chosen as following
\begin{enumerate}
\item Fix an arbitrary $n_2 > 0$.
\item Select $\alpha_2, \alpha_3$ sufficiently large to obtain $c_4>0 \; \forall \xi\in [0,L]$.
\item Select $\alpha_1$ and $n_1$ sufficiently large such that $c_3>0 \; \forall \xi\in [0,L]$.
\item The constant $n_0$ is selected sufficiently large such that $c_1,c_2,c_5,c_6>0 \; \forall \xi\in [0,L]$ and $\kappa_1$ of point \textit{i)} of Proposition \ref{prop:Lyap_Ineqs} is strictly positive $\kappa_1>0$.
\end{enumerate}
Therefore we have
\begin{equation}
\dot{V}_+\leq -\beta E
\end{equation}
with $\beta$ defined in the Theorem's statement. Using point \textit{ii)} of Proposition \ref{prop:Lyap_Ineqs} we obtain
\begin{equation}
\dot{V}_+\leq -\kappa_2 V
\end{equation}
with $\kappa_2 = \frac{\beta}{\eta}$. Hence, using Theorem \ref{thm_GenExp}, we can conclude that the origin is an exponentially stable equilibrium, and the trajectories of system \eqref{eq:system} fulfil the estimation \eqref{eq:estimation}.
\end{proof}

\begin{remark}
    The boundary conditions at $\xi = 0$ and $\xi = L$ can be interchanged without changing the result of Theorem~\ref{thm:main}. 
\end{remark}

\begin{remark}
In case of constant parameters $\rho, I_\rho, K, EI$ it is possible to prove that the Dini time derivative of the cross-term functions in \eqref{eq:Cross_functions} becomes
\begin{equation}
\begin{array}{rl}
\dot{F}_{1,+} \leq & \int_0^L\left\{ \left(\frac{\alpha_1\gamma^2}{2\rho} + K + \rho\right)\frac{z_1^2}{\rho}  + \frac{1}{2I_\rho}\left(\frac{2LK}{\pi}\right)^2\frac{z_2^2}{I_\rho} \right. \\ & \left. - \left( \frac{K}{2} - \frac{K}{2\alpha_1}\left(\frac{2L}{\pi}\right)^2\right)Kz_3^2\right\}d\xi \\ & + \frac{L}{2}\left(Kz_3(L,t)\right)^2
\end{array}
\end{equation}
\begin{equation}
\begin{array}{rl}
\dot{F}_{2,+} \leq & \int_0^L \left\{ \left(\frac{\alpha_3\delta^2}{2I_\rho} + \frac{3}{2}EI\right)\frac{z_2^2}{I_\rho} + \frac{\alpha_2K}{2}Kz_3^2 \right. \\ & \left. - \left(\frac{EI}{2} - \frac{EI(2L)^2}{2\alpha_2\pi^2} - \frac{EI(2L)^2}{2\alpha_3\pi^2}  \right)EIz_4^2 \right\}d\xi \\
&  + \frac{L}{2}(EIz_4(L,t))^2.
\end{array}
\end{equation}
Therefore, the Dini time derivative of the Lyapunov function takes the same form as in \eqref{eq:LyapInequality}, but with constant coefficients
\begin{equation}\label{eq:constant coefficients}
\begin{array}{rl}
c_1 = & \frac{n_0\gamma}{\rho^2} - \frac{n_1\alpha_1\gamma^2}{2\rho} - n_1K - n_1\rho  \\
c_2 = & \frac{n_0\delta}{I_\rho^2}-\frac{n_1}{2I_\rho}\left( \frac{2LK}{\pi}\right)^2 - \frac{n_2\alpha_3\delta^2}{2I_\rho} - \frac{3n_2}{2}EI  \\
c_3 = & \frac{n_1K}{2} - \frac{n_1K}{\alpha_1} \left(\frac{2L}{\pi} \right)^2 - \frac{n_2\alpha_2 K}{2} \\
c_4 = & \frac{n_2EI}{2} - \frac{n_2EI(2L)^2}{2\alpha_2\pi^2} - \frac{n_2EI(2L)^2}{2\alpha_3\pi^2} \\
c_5 = & n_0\gamma(L)  - \frac{L n_1\gamma(L)^2}{2}   \\
c_6 = & n_0\delta(L) - \frac{L n_2\delta(L)^2}{2}.
\end{array}
\end{equation}
\end{remark}
\vspace{0.3cm}

The explicit value of the exponential decay rate $\kappa_2$ depends on the coefficients $n_0,n_1,n_2$ as well as on $\alpha_1,\alpha_2,\alpha_3$. Given a certain set of values of the physical parameters $\rho, I_\rho, K, EI, L, \gamma, \delta$, different values of the exponential decrease rate can be obtained by varying $n_0,n_1,n_2,\alpha_1,\alpha_2,\alpha_3$ as soon as the positive conditions of $\kappa_1, \beta$ and $\eta$ are respected.

\begin{example}
Assume that the Timoshenko's beam equation in \eqref{eq:OriginalEqs} have a length $L=1$ and the parameters $\rho,\; I_\rho, \; K\; EI, \; \gamma, \; \delta$ have the following shape
\begin{equation}
    (\cdot) = 0.4 + 0.01\sin( 2\pi\xi + \phi_{(\cdot)}),
\end{equation}
with
\begin{equation*}
    \phi_\rho = \frac{\pi}{4} \quad \phi_{I_\rho} = \frac{3\pi}{4} \quad \phi_K = \frac{\pi}{6} \quad \phi_{EI} =  \frac{2\pi}{3} \quad  \phi_\gamma = 0 \quad  \phi_\delta = \frac{\pi}{2}.
\end{equation*}
Consider the Lyapunov function in \eqref{eq:LyapunovFunc} with constants $n_0 = 37 ,\; n_1 = 67, \; n_2 = 39 $ and $\alpha_1 = 5 ,\; \alpha_2 = 1, \; \alpha_3 = 6$. Therefore, according to Theorem~\ref{thm:main}, we can compute the exponential bound \eqref{eq:estimation} coefficients $\kappa_1 = 4.77$ and $\kappa_2 = \frac{\beta}{\eta} = \frac{4.01}{64.47} = 0.0622$. \\
In order to show the exponential bound of the system's state norm, we perform the numerical simulations using the Matlab\textsuperscript{\textregistered} environment and the ``ode23tb" time integration algorithm. To do that, a PH structure-preserving finite element spatial discretization as described in \cite[Section 2.2]{MattioniThesis} has been carried on \eqref{eq:system} to obtain a finite dimensional Linear Time Invariant (LTI) PH approximation of \eqref{eq:system}. In this specific example, the system has been divided into $50$ discretizing elements; therefore, the LTI system has $200$ states. To perform the numerical simulations, we impose the initial conditions $z_1(\xi,0) = z_2(\xi,0) = 0$ and $z_3 = \frac{1}{2}(1-\cos(\frac{2\pi\xi}{L}))$, $z_4 = 1-\cos(\frac{2\pi\xi}{L})$. Figure \ref{fig:1} shows the trajectory time evolution of the beam deformation $w(\xi,t)$ and its velocity $\dot{w}(\xi,t)$, while Figure \ref{fig:2} shows the state's norm evolution together with the computed exponential bound \eqref{eq:estimation}. We remark that the computed exponential bound is conservative. This is because the proposed Lyapunov parameters are not optimal with respect to the maximum decay rate.

\begin{figure}
    \centering
    \includegraphics{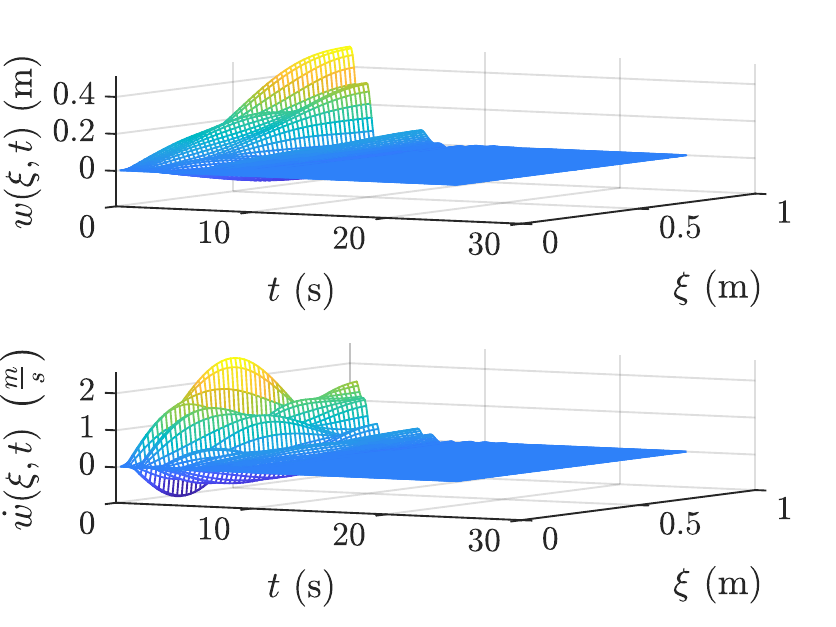}
    \caption{$w(\xi,t)$ and $\dot{w}(\xi,t)$ evolution along time.}\label{fig:1}
\end{figure}{}

\begin{figure}
    \centering
    \includegraphics{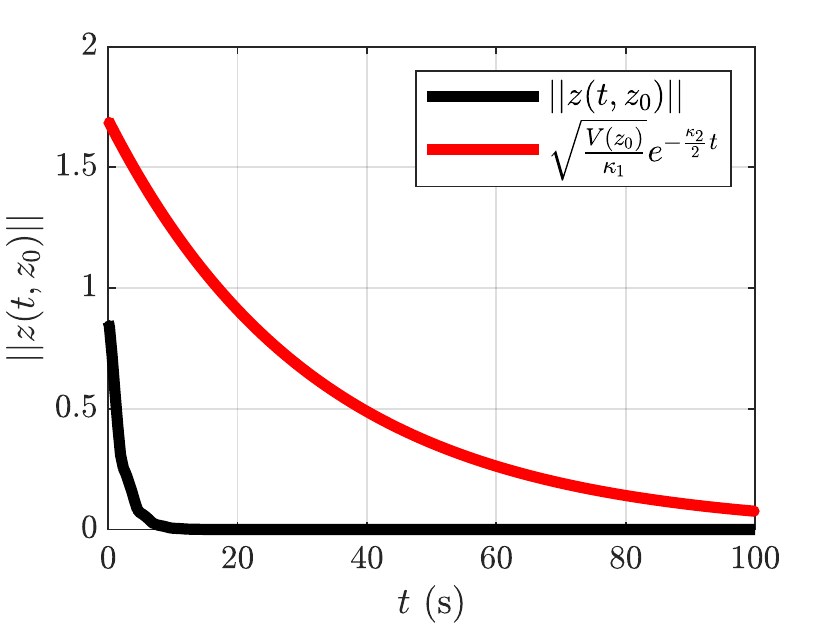}
    \caption{State's norm evolution along time and exponential bound with current parameters selection.}\label{fig:2}
\end{figure}{}

\end{example}

\section{Conclusions}\label{conclusions}

In this paper, the exponential stability problem of Timoshenko's beam equations with space-varying parameters and with viscous damping in both the vertical and rotational dynamics has been considered. After recalling some basic inequalities, Timoshenko's equations have been rewritten in the PH framework and the existence and uniqueness of solutions have been proven. The exponential bound of the state norm has been obtained using Lyapunov arguments. The defined Lyapunov function is composed of the internal energy and two crossing terms and it has been proven to be finite in all the state space. Therefore, the time derivative of the Lyapunov function along the system trajectories has been computed, and the exponential stability has been proven. For sake of generality, the Lyapunov function's parameters have not been a priori fixed. In an illustrative example, the exponential bound coefficients are computed for Timoshenko's beam equations with space-varying parameters.\\
The future work will focus on the stabilization problem in case the viscously damped flexible beam is part of a mechanism. For this purpose, the Lyapunov function proposed in this technical note can be used, in composition with other terms, to prove exponential stability. 

\bibliographystyle{unsrt}

\end{document}